\newcommand {\R}{\mathbb{R}}
\newcommand {\Ss}{\mathbb{S}}
\newcommand {\calS}{\mathcal{S}}
\newcommand {\calT}{\mathcal{T}}
\newcommand {\calI}{\mathcal{I}}
\newcommand {\calW}{\mathcal{W}}
\newcommand {\calF}{\mathcal{F}}
\newcommand {\calM}{\mathcal{M}}
\DeclareMathOperator{\Atn}{Atn}
\newtheorem {thm} {Theorem}
\newtheorem {lemma} [thm] {Lemma}
\newtheorem {cor} [thm] {Corollary}
\begin{document}
\title{
\bf  A Payne-Weinberger eigenvalue estimate for wedge domains on spheres
 \thanks{AMS Subject classification. Primary:  35P15.}
}
\author{  Jesse Ratzkin \\
University of Georgia \\
{\tt jratzkin@math.uga.edu}
\and
Andrejs Treibergs \\University of Utah\\{\tt treiberg@math.utah.edu}}

\date{\today}

\maketitle

\begin{abstract}
A Faber-Krahn type argument gives a sharp lower estimate for the first 
Dirichlet eigenvalue for subdomains of  wedge domains in spheres, 
generalizing the inequality for the plane, found by Payne and Weinberger.
An application is an alternative proof to the finiteness of a Brownian 
motion capture time estimate. \end{abstract}

Many  lower estimates for the first Dirichlet eigenvalue of a domain stem 
from an inequality between a line integral and an area 
integral \cite[pp. 85--133]{Ch}, \cite[pp. 37--40]{LT}, 
\cite[pp. 462--467]{P}. These inequalities are often sharp, in that 
equality of the eigenvalues implies a geometric equality. 
For example, the Faber-Krahn inequality 
\cite{F},~\cite{K},  proved by comparing level sets of the 
eigenfunction using the classical isoperimetric inequality, reduces 
to equality for round disks.  Cheeger's inequality \cite{C} bounds the 
eigenvalue from below in terms of the minimal ratio of area to length of 
subdomains. 

Our main result, Theorem~\ref{Main_th}, is a lower bound for the first 
Dirichlet 
eigenvalue for a domain contained in a wedge in a two sphere, generalizing
an eigenvalue estimate of Payne and Weinberger~\cite{PW}, \cite[p.462]{P}
for planar domains contained in a wedge. As 
an application, we give an alternative proof of our Brownian capture 
time estimate~\cite{RT}. Curiously, our proof does not seem to carry over to 
domains contained in a wedge in the hyperbolic plane. 

If $(\rho, \theta)$ are polar coordinates centered at a pole of 
$\Ss^2$, recall that the round metric is given by 
$$ds^2 = d\rho^2 + \sin^2 \! \rho \, d\theta^2.$$
Let $\calW=\{(\rho,\theta): 0\le \theta\le\pi/\alpha,\  0\le 
\rho<\pi\}$ be the sector in $\Ss^2$ of angle $\pi/\alpha$, for 
$\alpha > 1$, and let $G$ be a domain such that $\overline{G} \subset 
\calW$ is compact. Also define the truncated sector~$\calS(r) := 
\{ (\rho, \theta): 0 \leq \theta \leq \pi/\alpha, 0 \leq \rho \leq r\}$. 
Observe that 
\begin{equation}\label{w_eqn}
w = \tan^\alpha\!\!\left(\frac {\rho}{2}\right) \, \sin \alpha \theta
\end{equation}
is a positive harmonic function in $\calW$, with zero boundary values. 
\begin{thm}\label{Main_th} For every subdomain $G$ with compact 
$\overline{G}\subset \calW$, we have the estimate 
\begin{equation}\label{EV_eq}
\lambda_1(G)\ge \lambda_1(\calS(r^*)),
\end{equation}
where $r^*$ is chosen such that
$$
\calI(G)=\int_G w^2\, da = \int_{\calS(r^*)} w^2\, da.
$$
Equality holds if and only if $G$ is the sector~$\calS(r^*)$.
\end{thm}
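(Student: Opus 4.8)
The plan is to run the classical Payne--Weinberger argument in the spherical setting: a ground state substitution turns $\lambda_1(G)$ into the bottom of a Rayleigh quotient for the weighted measure $d\mu = w^2\,da$, and a $\mu$-symmetrization --- powered by a weighted isoperimetric inequality whose extremal sets are exactly the truncated sectors $\calS(r)$ --- pushes the quotient down to its value on $\calS(r^*)$.

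\textbf{Step 1 (ground state substitution).} Write the positive first Dirichlet eigenfunction of $G$ as $u = w\phi$. Because $w$ is harmonic in $\calW$, one computes $\Div(w^2\nabla\phi) + \lambda_1(G)\,w^2\phi = 0$ in $G$; multiplying by $\phi$ and integrating by parts, the boundary term $\int_{\del G} w\phi^2\,\del_\nu w\,ds$ vanishes (on $\del G\cap\calW$ we have $w>0$, hence $\phi = u/w = 0$; on $\del G\cap\del\calW$ the prefactor $w$ vanishes), so $\int_G|\nabla u|^2\,da = \int_G w^2|\nabla\phi|^2\,da$. Therefore
\[
\lambda_1(G) \;=\; \frac{\displaystyle\int_G w^2|\nabla\phi|^2\,da}{\displaystyle\int_G w^2\phi^2\,da},
\]
and this is the minimum of the weighted Rayleigh quotient $\int w^2|\nabla\psi|^2 \big/ \int w^2\psi^2$ over admissible $\psi$ on $G$. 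The same identity holds verbatim on $\calS(r^*)$. So it suffices to produce, from $\phi$, an admissible function on $\calS(r^*)$ whose weighted Rayleigh quotient is no larger.

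\textbf{Step 2 (weighted isoperimetric inequality --- the crux).} Claim: among subdomains $\Omega$ with $\overline{\Omega}\subset\calW$, the sector $\calS(\sigma)$ with $\calI(\calS(\sigma)) = \calI(\Omega)$ minimizes the weighted relative perimeter $\int_{\del\Omega\cap\calW} w^2\,ds$, with equality only for $\Omega = \calS(\sigma)$ up to null sets. Since $w$ vanishes on $\del\calW$, that part of the boundary never contributes, so this is the isoperimetric problem on the manifold with density $(\calW, ds^2, w^2)$. The circles $\{\rho = \sigma\}$ foliate $\calW$, have constant weighted geodesic curvature $\cot\sigma - 2\alpha/\sin\sigma$, and carry the explicit profile $\calI(\calS(\sigma)) = \tfrac{\pi}{2\alpha}\int_0^\sigma \tan^{2\alpha}(t/2)\sin t\,dt$ with boundary weight $\tfrac{\pi}{2\alpha}\tan^{2\alpha}(\sigma/2)\sin\sigma$ --- the first being the antiderivative of the second, the model relation for a CMC-type foliation. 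I would prove the claim either by exhibiting a calibration, a vector field built from $w$ and the radial distance $\rho$ whose divergence equals $w^2$ and whose length is pointwise controlled by the perimeter density along the leaves, together with monotonicity of the one-variable profile; or by transplanting $\Omega$ through the conformal map $(\rho,\theta)\mapsto \tan^\alpha(\rho/2)\,e^{i\alpha\theta} = v + iw$ (with $v$ the harmonic conjugate of $w$) onto the upper half-plane, where $\calS(\sigma)$ becomes a half-disk centred at the boundary and the pushed-forward density has the form (radial function of $|\zeta|$) times $(\operatorname{Im}\zeta)^2$, exactly as in the classical Payne--Weinberger rearrangement. This step, and in particular its rigidity case --- which must force $\del\Omega\cap\calW$ to be a level set $\{\rho=\sigma\}$ --- is where I expect the real work to lie.

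\textbf{Step 3 (symmetrization and conclusion).} Let $\phi^*$ be the $\mu$-decreasing rearrangement of $\phi$ onto $\calS(r^*)$: a function of $\rho$ alone, equimeasurable with $\phi$ with respect to $w^2\,da$, so $\int_{\calS(r^*)} w^2(\phi^*)^2\,da = \int_G w^2\phi^2\,da$. Expressing both Dirichlet integrals by the coarea formula and applying Cauchy--Schwarz on the level sets $\{\phi = t\}$ together with the weighted isoperimetric inequality of Step 2 (comparing $\{\phi>t\}$ with the sector of equal $\calI$) gives $\int_{\calS(r^*)} w^2|\nabla\phi^*|^2\,da \le \int_G w^2|\nabla\phi|^2\,da$. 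Combining with Step 1,
\[
\lambda_1(G) \;=\; \frac{\int_G w^2|\nabla\phi|^2\,da}{\int_G w^2\phi^2\,da} \;\ge\; \frac{\int_{\calS(r^*)} w^2|\nabla\phi^*|^2\,da}{\int_{\calS(r^*)} w^2(\phi^*)^2\,da} \;\ge\; \lambda_1(\calS(r^*)),
\]
the last inequality by the variational characterization of Step 1 on $\calS(r^*)$. Finally, equality throughout forces equality in the weighted isoperimetric inequality for almost every super-level set $\{\phi>t\}$; by the rigidity case each such set is a truncated sector, so $\phi$ is radial and $G = \calS(r^*)$.
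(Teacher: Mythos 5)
Your overall architecture matches the paper exactly: Step 1 (the substitution $u = w\phi$, giving a Rayleigh quotient in the weighted measure $w^2\,da$) and Step 3 (coarea/level-set comparison, symmetrization to a one-dimensional problem whose ground state is the radial eigenfunction of $\calS(r^*)$) are both the paper's Section 2 in slightly different clothing. The weighted isoperimetric inequality you state in Step 2 is precisely the paper's Lemma~\ref{isop_lemma}, so you have correctly isolated the crux.

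But Step 2 is where the proof actually lives, and neither of your two sketched routes is worked out --- nor would the conformal transplantation route work as stated. Under $\zeta = \tan^{\alpha}(\rho/2)\,e^{i\alpha\theta}$ the pushed-forward measure is $w^2\,da = \alpha^{-2}\,(\operatorname{Im}\zeta)^2\,\sin^2\!\rho\,\tan^{-2\alpha}(\rho/2)\,dx\,dy$: the extra radial factor $\sin^2\!\rho\,\tan^{-2\alpha}(\rho/2)$ is not constant, so you do \emph{not} land ``exactly as in the classical Payne--Weinberger rearrangement,'' and the half-disks are no longer obviously extremal. The paper avoids this by mapping with the \emph{non}-conformal radial function $f = 2^{1/3}\sin^{(1+2\alpha)/3}(\rho/2)\cos^{(1-2\alpha)/3}(\rho/2)$, chosen so that the pointwise line-element inequality~(\ref{moment_ineq}) holds (this requires the two conditions~(\ref{desideratum1})--(\ref{desideratum2}), and it is exactly condition~(\ref{desideratum1}) that fails in the hyperbolic case). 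After transplanting, one only controls the moment $\int_{\tilde G} y^2\,dx\,dy$, which is not $\int_G w^2\,da$; converting the one into the other takes the rest of the proof --- two applications of Szeg\H o's Lemma~\ref{Szego_lemma}, a H\"older step, and a Jensen step, carried out with a free parameter $\beta$ that is finally pushed to $4/3$ --- and the resulting $\Upsilon_\alpha$ has to be computed explicitly (formula~(\ref{PhiBeta_eq})) to match the Sturm--Liouville problem on the sector. Without this, your Step 2 is an assertion, not a proof, and the calibration alternative (the constancy of the weighted curvature $\cot\sigma - 2\alpha/\sin\sigma$ along the foliation) is only a necessary condition for extremality, not a proof of the isoperimetric inequality.
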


Our argument is similar to the proof of the planar version 
in~\cite{PW}. Our main tool is an isoperimetric-type inequality, 
Lemma \ref{isop_lemma}, which we prove in Section \ref{isop_sec}. 
We use this inequality to estimate the Rayleigh quotient of a test 
function, proving Theorem \ref{Main_th}, in Section~\ref{rayliegh_sec}. 
Finally, in Section \ref{appl_sec}, we apply our eigenvalue 
estimate to a problem in Brownian pursuit. 

\section{Isoperimetric Inequality} \label {isop_sec}

In this section we prove an isoperimetric inequality for moments 
of inertia of a domain $G \subset \calW$. Later we will use this 
inequality to estimate the Raleigh quotient of admissible 
functions in $G$. 

We begin by stating a version Szeg\H o's Lemma \cite {Sz}:
\begin {lemma} \label{Szego_lemma} 
Let $\psi,\phi:[0,\omega)\to[0,\infty)$ be locally integrable functions 
with $\psi$ nonnegative and $\phi$ nondecreasing. Let $\Phi(y)=\int_0^y 
\phi(t)\, dt$ and $\Psi(x)=\int_0^y \psi(s)\, ds$ be their primitives. 
Let $E\subset [0,\omega)$ be a bounded measurable set. Then
\begin{equation}\label{eqn_Szego}
 \Phi\left( \int_E \psi(x)\, dx\right)\le  \int_E \phi(\Psi(x))\, \psi(x)\, dx.
\end{equation}
For $\phi$ increasing, equality holds if and only if the measure of 
$E \cap [0,R]$ is $R$. 
\end {lemma}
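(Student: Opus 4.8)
The plan is to compare the set $E$ against a suitable initial interval $[0,R]$. Choose $R\in[0,\omega)$ so that $\int_0^R\psi\,dx=\int_E\psi\,dx=:S$ (such an $R$ exists since $\Psi$ is continuous and nondecreasing and $S\le\Psi(\omega^-)$; if $\psi$ vanishes on intervals one may simply take $R$ to be the largest admissible value). Since $\Psi'=\psi$ and $\Phi'=\phi$ almost everywhere and $\phi$, being nondecreasing, is locally bounded, $\Phi$ is locally Lipschitz, hence the composite $\Phi\circ\Psi$ is absolutely continuous with $(\Phi\circ\Psi)'(x)=\phi(\Psi(x))\,\psi(x)$ a.e. The fundamental theorem of calculus then gives
$$
\Phi\!\left(\int_E\psi\,dx\right)=\Phi(\Psi(R))=\int_0^R\phi(\Psi(x))\,\psi(x)\,dx ,
$$
so that \eqref{eqn_Szego} is equivalent to the rearrangement assertion $\int_0^R\phi(\Psi)\,\psi\,dx\le\int_E\phi(\Psi)\,\psi\,dx$, i.e.\ to
$$
\int_{[0,R]\setminus E}\phi(\Psi(x))\,\psi(x)\,dx\;\le\;\int_{E\setminus[0,R]}\phi(\Psi(x))\,\psi(x)\,dx .
$$

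The next step exploits monotonicity. Subtracting $\int_{[0,R]\cap E}\psi\,dx$ from the identity $\int_0^R\psi\,dx=\int_E\psi\,dx$ shows the two exchanged pieces carry equal $\psi$-mass, $\int_{[0,R]\setminus E}\psi\,dx=\int_{E\setminus[0,R]}\psi\,dx=:T$. On $[0,R]\setminus E$ we have $x\le R$, hence $\Psi(x)\le\Psi(R)$ and, as $\phi$ is nondecreasing, $\phi(\Psi(x))\le\phi(\Psi(R))$; on $E\setminus[0,R]$ we have $x\ge R$, hence $\phi(\Psi(x))\ge\phi(\Psi(R))$. Therefore
$$
\int_{[0,R]\setminus E}\phi(\Psi)\,\psi\,dx\;\le\;\phi(\Psi(R))\,T\;\le\;\int_{E\setminus[0,R]}\phi(\Psi)\,\psi\,dx ,
$$
which is precisely the inequality sought. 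I do not anticipate a genuine obstacle in this half; the only point requiring care is the justification of the chain rule and the fundamental theorem of calculus for $\Phi\circ\Psi$ under the mere hypothesis of local integrability, handled as indicated above.

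For the equality statement assume $\phi$ is strictly increasing. If $|E\cap[0,R]|=R$ then $[0,R]\subseteq E$ up to a null set, so $T=S-\int_{[0,R]\cap E}\psi\,dx=S-\int_0^R\psi\,dx=0$; then $[0,R]\setminus E$ is Lebesgue-null and $\psi=0$ a.e.\ on $E\setminus[0,R]$, so both displayed integrals vanish and \eqref{eqn_Szego} is an equality. Conversely, if equality holds then both inequalities in the last display are equalities, forcing $\phi(\Psi(x))=\phi(\Psi(R))$ for $\psi\,dx$-almost every $x$ in each exchanged set; strict monotonicity of $\phi$ together with $x\le R$ on $[0,R]\setminus E$ then forces that set to be $\psi$-null, and a short additional argument (passing to the distribution function of $\psi$, or using that $R$ was taken maximal) upgrades this to Lebesgue-null, i.e.\ $[0,R]\subseteq E$ a.e. I expect this equality discussion, rather than the inequality itself, to be the fussiest part of the write-up, precisely because of the interplay with intervals on which $\psi$ degenerates.
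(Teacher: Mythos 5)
Your inequality argument is correct, and it is a genuinely different execution of the same underlying "bathtub" idea. The paper first changes variables to $y=\Psi(x)$, so that the weight disappears and the claim collapses to the classical rearrangement inequality $\Phi(|E'|)\le\int_{E'}\phi(y)\,dy$ for the pushed-forward set $E'$, which is then proved by integrating $\phi\bigl(\int_0^y\chi_{E'}\bigr)\le\phi(y)$ against $\chi_{E'}\,dy$. You instead stay in the $x$-variable, match $E$ against the initial interval $[0,R]$ of equal $\psi$-mass, and show the swapped pieces $[0,R]\setminus E$ and $E\setminus[0,R]$ are sandwiched on opposite sides of $\phi(\Psi(R))$. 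Your approach avoids any discussion of the pushforward measure and image set, at the price of having to justify the chain-rule/FTC computation $\Phi(\Psi(R))=\int_0^R\phi(\Psi)\psi\,dx$ (which the paper handles implicitly via the same change of variables, just restricted to an interval). Both are legitimate; the paper's version is slightly slicker once the substitution is accepted.

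On the equality characterization you are right to be nervous. Your argument correctly shows that equality with $\phi$ strictly increasing forces $[0,R]\setminus E$ to be $\psi\,dx$-null, but the claimed "upgrade" to Lebesgue-null does not hold in general: if $\psi$ vanishes on a subinterval $[a,b]\subset[0,R]$, one can delete $[a,b]$ from $E$ and shift mass past $R$ without destroying equality, yet $|E\cap[0,R]|<R$ for your choice of $R$. (A concrete instance: $\psi=\chi_{[0,1]}+\chi_{[2,3]}$, $E=[0,1]\cup[2,3]$; your $R=3$, $|E\cap[0,3]|=2$.) The paper's statement is itself loose here — it does not pin down which $R$ — and in the proof the condition is really $\mu(E'\cap[0,y])=y$ for a.e.\ $y\in E'$, i.e.\ the \emph{image} $E'=\Psi(E)$ is an initial interval up to a null set. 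In the applications $\psi$ is strictly positive on the relevant range, so the distinction is harmless, but as a standalone lemma your equality clause should be phrased in terms of $\psi$-measure (or of $\Psi(E)$) rather than Lebesgue measure of $E\cap[0,R]$.
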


\begin{proof} Let $\mu$ be Lesbesgue measure with line element 
$dx$ and define the measure $\nu$ by $d\nu = \psi\, dx$. Then 
$\nu$ is absolutely continuous with respect to $\mu$ and, using 
the Radon--Nikodym Theorem, when we change variables $y = \Psi(x)$
we have $dy = \psi(x) dx$. Let $E'$ be the image of $E$ under 
the map $\Psi$, with characteristic function $\chi_{E'}$, so that 
$\Phi(\int_{E'} dy) = \Phi (\int_E \psi(x) dx)$. Next, because 
$\phi$ is nondecreasing, for $y\ge 0$,
$$
\phi \left ( \int_0^y \chi_{E'} dy\right ) \leq \phi(y).
$$
Moreover, for $\phi$ increasing, equality holds if and only if 
$\mu(E'\cap [0,y]) = y$. 
We multiply this inequality by $\chi_{E'}$ and integrate: 
$$\int_0^\omega \phi \left (\int_0^y \chi_{E'} dt \right ) \chi_{E'} 
dy \leq \int_0^\omega\phi(y) \chi_{E'}  dy = \int_{E'} \phi(y) dy = 
\int_E \phi(\Psi(x)) \psi(x) dx.$$
On the other hand, 
$$\int_0^\omega \phi \left ( \int_0^y \chi_{E'} dt \right ) 
\chi_{E'} dy = \Phi \left ( \int_{E'} dy \right ) = \Phi\left 
( \int_E \psi(x) dx \right ).$$
Putting these two inequalities together yields the 
inequality~(\ref{eqn_Szego}). 
 \end{proof}

\begin {lemma} \label{isop_lemma}
Let $G\subset \calW$ be a domain with compact closure. Then 
there is a function $\Upsilon_{\alpha}=\calF\circ Z^{-1}$ so that
\begin{equation}\label{Isop_eq}
\int_{\partial G} w^2\, ds \ge 
 \frac{\pi}{2\alpha}\,\Upsilon_{\alpha}
\left(\frac{2\alpha}{\pi}\int_{G} w^2\, da\right).
 \end{equation}
 Here $\calF(\rho)=\tan^{2\alpha}\!(\rho/2)\, \sin\rho$ and $Z$ is 
given by~(\ref{Z_eq}).
Equality holds if and only if $G$ is a sector $\calS(r)$. 
\end {lemma}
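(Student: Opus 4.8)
The plan is to prove Lemma~\ref{isop_lemma} as a weighted isoperimetric inequality on $\calW$ with weight $w^2=\tan^{2\alpha}(\rho/2)\sin^2\alpha\theta$, showing that the sectors $\calS(r)$ are the isoperimetric regions. The weight factors as $a(\rho)b(\theta)$ with $a(\rho)=\tan^{2\alpha}(\rho/2)$ strictly increasing on $[0,\pi)$ and $b(\theta)=\sin^2\alpha\theta$ symmetric and decreasing about $\theta_0=\pi/(2\alpha)$ on $[0,\pi/\alpha]$. First I would reduce by approximation to $G$ smooth, with $\partial G$ in general position relative to the edges $\theta=0,\pi/\alpha$ and with $\overline G\subset\{\rho\le R_0\}$ for some $R_0<\pi$, since the weighted area and the weighted boundary length are continuous under such approximations. (The conformal map $\zeta\mapsto\zeta^\alpha$, with $\zeta=\tan(\rho/2)e^{i\theta}$ the stereographic coordinate, carries $\calW$ onto the upper half plane with $w$ the new imaginary part, the spherical counterpart of the picture in \cite{PW}; but this alone does not reduce the statement to a flat weighted isoperimetric problem, since the area and length weights acquire different powers of the conformal factor, so I would work directly in the coordinates $(\rho,\theta)$.)

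The second step is a symmetrization. For each $\theta$, replace the slice $P(\theta)=\{\rho:(\rho,\theta)\in G\}$ by the interval $[0,\rho_0(\theta))$ determined by $\int_0^{\rho_0(\theta)}\calF(\rho)\,d\rho=\int_{P(\theta)}\calF(\rho)\,d\rho$, where $\calF(\rho)=\tan^{2\alpha}(\rho/2)\sin\rho$. Since $\int_Gw^2\,da=\int_0^{\pi/\alpha}\sin^2\alpha\theta\bigl(\int_{P(\theta)}\calF\,d\rho\bigr)d\theta$, this preserves $\calI(G)$; and since $a$ is increasing, anchoring each ray slice at the small-weight end $\rho=0$ does not increase $\int_{\partial G}w^2\,ds$ (a weighted Steiner-type rearrangement, provable by a slicing argument along the rays or by iterating two-point symmetrizations). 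One may additionally symmetrize the circular slices about $\theta_0$ using that $b$ is symmetric-decreasing, but the radial rearrangement alone should suffice. One is then left with a graph region $G^*=\{(\rho,\theta):0\le\rho<\rho_0(\theta)\}$ satisfying $\calI(G^*)=\calI(G)$ and $\int_{\partial G^*}w^2\,ds\le\int_{\partial G}w^2\,ds$.

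This reduces the lemma to a one-dimensional inequality. With $Z(\rho)=\int_0^\rho\calF(t)\,dt$ — so that $\Upsilon_\alpha=\calF\circ Z^{-1}$, the inverse $Z^{-1}$ is well defined because $\calF>0$ on $(0,\pi)$, and $\int_{\calS(r)}w^2\,da=\frac{\pi}{2\alpha}Z(r)$ — one computes
$$
\int_{G^*}w^2\,da=\int_0^{\pi/\alpha}\sin^2\alpha\theta\,Z(\rho_0(\theta))\,d\theta,\qquad
\int_{\partial G^*}w^2\,ds=\int_0^{\pi/\alpha}\tan^{2\alpha}(\rho_0/2)\,\sin^2\alpha\theta\,\sqrt{\rho_0'^2+\sin^2\rho_0}\;d\theta .
$$
So it suffices to show that, among profiles $\rho_0:[0,\pi/\alpha]\to[0,\pi)$ with a prescribed value of $\int_0^{\pi/\alpha}\sin^2\alpha\theta\,Z(\rho_0)\,d\theta$, the constant profile $\rho_0\equiv r^*$ minimizes $\int_0^{\pi/\alpha}\tan^{2\alpha}(\rho_0/2)\sin^2\alpha\theta\sqrt{\rho_0'^2+\sin^2\rho_0}\,d\theta$; for the constant, the two integrals specialize to $\frac{\pi}{2\alpha}Z(r^*)$ and $\frac{\pi}{2\alpha}\calF(r^*)$, and the Euler--Lagrange equation holds, with Lagrange multiplier $(\ln\calF)'(r^*)=(2\alpha+\cos r^*)/\sin r^*$.

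The crux is this one-dimensional minimality, and here is the main obstacle: the factor $\sqrt{\rho_0'^2+\sin^2\rho_0}$ cannot be discarded. Replacing it by $\sin\rho_0$ would turn the claim into $\int_0^{\pi/\alpha}\sin^2\alpha\theta\,\Upsilon_\alpha(Z(\rho_0))\,d\theta\ge\frac{\pi}{2\alpha}\Upsilon_\alpha\bigl(\tfrac{2\alpha}{\pi}\int_0^{\pi/\alpha}\sin^2\alpha\theta\,Z(\rho_0)\,d\theta\bigr)$, a Jensen inequality for $\Upsilon_\alpha$ — but $\Upsilon_\alpha''$ has the sign of $-(1+2\alpha\cos\rho)$ at $\rho=Z^{-1}(\,\cdot\,)$, so $\Upsilon_\alpha$ is concave near $0$ and this weaker statement fails for domains near the pole. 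Instead I would retain the gradient term and use that it controls the oscillation of $\rho_0$: rearranging along the profile via Szeg\H o's Lemma (Lemma~\ref{Szego_lemma}) with $\psi=\calF$, $\Psi=Z$, and a suitable nondecreasing $\phi$, together with the strict monotonicity of $\calF$ on $(0,\pi)$ — which is exactly $(\ln\calF)'=(2\alpha+\cos\rho)/\sin\rho>0$ for $\alpha>1$ — should force $\int_{\partial G^*}w^2\,ds\ge\frac{\pi}{2\alpha}\calF(r^*)=\frac{\pi}{2\alpha}\Upsilon_\alpha\bigl(\tfrac{2\alpha}{\pi}\calI(G)\bigr)$. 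Finally, the equality statement is read off from the chain: equality in $\sqrt{\rho_0'^2+\sin^2\rho_0}\ge\sin\rho_0$ and in Lemma~\ref{Szego_lemma} forces $\rho_0$ constant, and equality in the symmetrization forces $G$ to have been a centered sector, so equality holds exactly when $G=\calS(r)$.
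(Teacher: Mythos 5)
Your proposal takes a different route from the paper — direct weighted rearrangement rather than a transplantation to the half-plane — but it has two concrete gaps that leave the lemma unproved.

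First, the weighted Steiner-type rearrangement is asserted, not proved, and its truth is not clear. After sliding each ray slice $P(\theta)$ to $[0,\rho_0(\theta))$ with $Z(\rho_0)=\int_{P(\theta)}\calF$, the inequality $\int_{\partial G^*}w^2\,ds\le\int_{\partial G}w^2\,ds$ requires (at least for a two-curve slab with boundary profiles $\rho_1<\rho_2$ merging into $\rho_0$ with $\calF(\rho_0)\rho_0'=\calF(\rho_2)\rho_2'-\calF(\rho_1)\rho_1'$) the pointwise estimate
$$
\tan^{2\alpha}\!\left(\tfrac{\rho_0}{2}\right)\sqrt{\rho_0'^2+\sin^2\rho_0}\ \le\ \tan^{2\alpha}\!\left(\tfrac{\rho_1}{2}\right)\sqrt{\rho_1'^2+\sin^2\rho_1}+\tan^{2\alpha}\!\left(\tfrac{\rho_2}{2}\right)\sqrt{\rho_2'^2+\sin^2\rho_2},
$$
and the mismatch between the radial weight $\tan^{2\alpha}(\rho/2)$ appearing in front of $\rho'$ and the weight $\calF(\rho)=\tan^{2\alpha}(\rho/2)\sin\rho$ governing the constraint means this does not follow from standard Steiner or polarization arguments. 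You would need to prove it, and it is not obviously true.

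Second, and more decisively, the one-dimensional reduction ends in a gesture rather than an argument. You correctly identify that dropping the $\rho_0'$ term would require convexity of $\Upsilon_\alpha$, which fails for small $\rho$ since $\Upsilon_\alpha''$ has the sign of $-(1+2\alpha\cos\rho)$; but the phrase ``retain the gradient term and use that it controls the oscillation of $\rho_0$\ldots should force'' is not a proof of the required minimality of the constant profile. The paper resolves exactly this non-convexity by a reparameterization trick: it introduces the family $\Phi_\beta$ with $\phi_\beta=\beta g_\beta\circ Z^{-1}$ for a free parameter $\beta\in[\tfrac{2\alpha+2}{2\alpha+1},\tfrac43)$ chosen so that $g_\beta$ (hence $\phi_\beta$) is increasing and $\Phi_\beta$ convex, applies Szeg\H o, H\"older, and Jensen to $\Phi_\beta$, and only at the end observes that $\Phi_\beta^{1/\beta}=\Upsilon_\alpha$ is independent of $\beta$ (equation~(\ref{PhiBeta_eq})). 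You also have no analogue of the paper's main structural step, the map $x=f(\rho)\cos\alpha\theta$, $y=f(\rho)\sin\alpha\theta$ with $f$ chosen to satisfy (\ref{desideratum1})--(\ref{desideratum2}): this map is not conformal (so your dismissal of the stereographic $\zeta\mapsto\zeta^\alpha$ route, while correct as far as it goes, misses the point), and it reduces the spherical weighted problem to the planar moment-of-inertia isoperimetric inequality of Payne--Weinberger, which is where the hard geometric input actually enters. Without a substitute for both that reduction and the $\beta$-convexification, the argument does not close.
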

\begin{proof}
Map the domain $G$ into a domain $\tilde G$ in the upper halfplane using 
the transformation
$$
x= f(\rho)\,\cos\alpha\theta,\qquad y=f(\rho)\,\sin\alpha\theta,
$$
where we will choose $f$ to satisfy formula (\ref{def_f}). 
The Euclidean line element is
$$
dx^2 + dy^2 = \dot f^2 \, d\rho^2 + \alpha^2 f^2 \, d\theta^2.
$$
We claim that the map satisfies
\begin{equation}\label{moment_ineq}
\alpha^2 \tan^{4\alpha}\!\left(\frac {\rho}{2}\right) \, \sin^4 \alpha \theta
\, (d\rho^2 + \sin^2\!\rho\, d\theta^2)
\ge y^4(dx^2 + dy^2).
\end{equation}
For this to be true pointwise, we need the inequalities to hold
\begin{align}
\alpha \tan^{2\alpha}\!\left(\frac {\rho}{2}\right) &\ge
f^2\, \dot f=\left( \frac {f^3}3\right)'\label{desideratum1}\\
\sin \rho\, \tan^{2\alpha}\!\left(\frac {\rho}{2}\right)&\ge f^3.\label{desideratum2}
\end{align}
Expand $\sin \rho = 2\sin(\rho/2)\cos(\rho/2)$ and use equality in
inequality~(\ref{desideratum2}) to define $f$: 
\begin{equation}\label{def_f}
f=2^{\frac 13} \sin^{\frac{1+2\alpha }{3}} \left(\frac{\rho}2\right) 
\cos^{\frac{1-2\alpha}{3}} \left(\frac{\rho}2\right).
\end{equation}
Differentiating, we see 
$$
f^2\,\dot f = \tan^{2\alpha}\!\left(\frac {\rho}{2}\right)\left[ 
\frac{2\alpha + \cos \rho}3\right], 
$$
which implies that the  inequality~(\ref{desideratum1}) holds as well.

Equation~(\ref{w_eqn}) and  inequality~(\ref{moment_ineq}) imply that
\[ 
\alpha \int_{\partial G} w^2\, ds = \alpha \int_{\partial G}
w^2 \sqrt{d\rho^2 + \sin^2\!\rho\, d\theta^2}  \geq  \int_{\partial \tilde G} y^2 \sqrt{dx^2 + dy^2} := 
\calM (\partial \tilde G).
\]
The right side is the moment of inertia of a uniform mass distribution of 
the curve $\partial \tilde G$ relative to the $y$-axis. 
Among all domains with given fixed surface moment
$$
\int_{\tilde G} y^2\, dx\, dy,
$$
the semicircular arcs centered on the $y$-axis minimize 
$\calM(\partial \tilde G)$ \cite[Section 2]{PW}.
Compute $\calM(\partial \tilde G)$ and $\calM(\tilde G)$ in the case where 
$\partial \tilde G$ is a semicircle of radius $R$: 
$$\calM(\partial \tilde G) =\int_0^\pi R^3 \sin^2 t dt = \frac{\pi R^3}{2}, 
\qquad \calM(\tilde G) = \int_0^\pi \int_0^R r^3 \sin ^2 \theta  dr\, d\theta 
= \frac{\pi R^4}{8}.$$
Solving for $R$ in the formula for $\calM(\tilde G)$ above and 
using the fact that semicircles are minimizers, we see that for a 
general domain $\tilde G$ in the upper half plane
$$
\calM (\partial \tilde G) \geq 2^{\frac{5}{4}} \pi^{\frac{1}{4}} 
\left \{ \int_{\tilde G} y^2 dx \, dy \right \}^{\frac{3}{4}}.$$

 Returning to the original variables, $dx\, dy = \alpha f 
\dot f\, d\rho\, d\theta$ so
\begin {align} \label {AreaMomOrig_eqn}
\int_{\partial G} w^2 ds & \geq  \frac{1}{\alpha} 2^{\frac 54}\pi^{\frac 14}
\left \{ \int_{G}f^2\, \sin^2(\alpha \theta)\, \alpha f \dot f\, d\rho \,
d\theta \right \}^{\frac 34} \\ \nonumber 
& =  \left(\frac{\pi }{2\alpha}\right)^{\frac 14} \left\{
\int_{G}\frac 43 \left[
\tan^{2\alpha}\!\left(\frac {\rho}{2}\right)\,\sin\rho\right]^{\frac 13}\,
\left[ {2\alpha + \cos \rho}\right]\,
\tan^{2\alpha}\!\left(\frac {\rho}{2}\right)\, \sin^2\!\alpha\theta\, 
d\rho\, d\theta\right\}^{\frac 34}.
\end {align}
Choose $\beta$ so that
$$ \frac{2\alpha+2}{2\alpha+1}\le \beta<\frac 43.$$
 Regroup the integral inside the braces
$$
I=\frac {4}{3\beta} 
\int_{G} \left[
\tan^{2\alpha}\!\left(\frac {\rho}{2}\right)\,\sin\rho\right]^{\frac 43-\beta}\,
\left[ {2\alpha + \cos \rho}\right]\,
\beta\left[
\tan^{2\alpha}\!\left(\frac {\rho}{2}\right)\,\sin\rho\right]^{\beta-1}\,
\tan^{2\alpha}\!\left(\frac {\rho}{2}\right)\,  d\rho\, \sin^2\!\alpha\theta\, 
d\theta.
$$
Use Lemma~\ref{Szego_lemma}, with 
$$
\Psi=\left[\tan^{2\alpha}\left(\frac{\rho}{2}\right)\,\sin\rho\right]^{\beta} \Rightarrow 
\psi = \beta\left(\tan^{2\alpha}\left(\frac{\rho}{2}\right)\, \sin \rho\right)^{\beta - 1}
[2\alpha + \cos \rho]\, \tan^{2\alpha}\left(\frac{\rho}{2}\right)
$$ 
and 
$$
\phi(z)=\frac{4}{3\beta}z^{\frac 4{3\beta}-1} \Rightarrow  \Phi(z) =
z^{\frac{4}{3\beta}}.
$$ 
So that $\phi$ is increasing, we require $\beta< 
\frac 43$. If $H_{\theta}=\{ \rho \in [0,\pi):(\rho,\theta)\in G\}$ is the 
slice of $G$ in the $\rho$-direction then Szeg\H o's inequality~(\ref{eqn_Szego})  implies 
\begin{equation}\label{eqn_inside}
I\ge  \int_0^{\pi/\alpha}\left( \beta \int_{H_\theta} 
\tan^{2\alpha\beta}\!\left(\frac {\rho}{2}\right)\,\sin^{\beta-1}\!\rho\,
\left[ {2\alpha + \cos \rho}\right]\, d\rho
\right)^{\frac 4{3\beta}}\, \sin^2\!\alpha\theta\, d\theta.
\end{equation}
Equality holds if and only if $H_{\theta}=[0,r(\theta)]$ is an interval 
{\it a.e.}
Next we let $p = \frac{4}{3\beta} > 1$, $q = \frac{4}{4-3\beta}$,
and define the measure $d\nu = \sin^2 \alpha \theta\,  d\theta$. 
H\"older's inequality implies
\begin {gather*}
\left [ \int_0^{\pi/\alpha} \left ( \beta \int_{H_\theta} 
\tan^{2\alpha\beta}\left(\frac{\rho}{2}\right)\,\sin^{\beta -1}(\rho)\, [2\alpha + 
\cos \rho]\, d\rho \right )^p d\nu \right ]^{\frac 1p} \left [ 
\int_0^{\pi/\alpha} d\nu \right ]^{\frac 1q}  \\ 
\geq \int_0^{\pi/\alpha}
\beta \int_{H_\theta} \tan^{2\alpha \beta}\left(\frac{\rho}{2}\right)\,\sin^{\beta - 1}
(\rho)\, [2\alpha + \cos \rho]\, d\rho \, d\nu. 
\end {gather*}
Raising both sides of this inequality to the power $p$, 
rearranging, and using the fact that 
$$
\int_0^{\pi/\alpha} d\nu = \int_0^{\pi/\alpha} \sin^2 \alpha 
\theta \, d\theta = \frac{\pi}{2\alpha},
$$
(\ref{eqn_inside}) becomes
$$
I 
  \geq  \left( \frac{2\alpha}{\pi}\right)^{\frac 4{3\beta}-1}\,\left(
\beta \int_0^{\pi/\alpha}
\int_{H_\theta} 
\tan^{2\alpha\beta}\!\left(\frac {\rho}{2}\right)\,\sin^{\beta-1}\!\rho\,
\left[ {2\alpha + \cos \rho}\right]\, d\rho
\, \sin^2\!\alpha\theta\, d\theta \right)^{\frac 4{3\beta}}. 
$$

We regroup the inside integral again: 
$$J=
 \int_0^{\pi/\alpha}\int_{H_\theta}
\tan^{2\alpha(\beta-1)}\!\left(\frac{\rho}{2}\right)\,
\sin^{\beta-2}\!\rho\, \left[ {2\alpha + \cos \rho}\right]\,
\tan^{2\alpha}\!\left(\frac {\rho}{2}\right)\,\sin\rho\, d\rho\, 
\sin^2\!\alpha\theta\, d\theta.
$$
Let us denote 
\begin{equation}\label{Z_eq}
Z(r)=\int_0^r \tan^{2\alpha}\!\left(\frac {\rho}{2}\right)\,\sin\rho\, d\rho,
\end{equation}
 and define $\bar r(r,\theta)$ by
$$
Z(\bar r)=\int_0^r \tan^{2\alpha}\!\left(\frac {\rho}{2}\right)\,
\chi_{H_{\theta}}(\rho)\,\sin\rho\, d\rho,
$$
where $\chi_H$ denotes the characteristic function of $H$. The integrand 
$\tan^{2\alpha}(\rho/2) \sin \rho$ is positive and increasing for 
the range of $\rho$ we are considering, and so  
$\bar r(r,\theta)\le r$ with equality if and only if $H_{\theta}
\cap[0,r]=[0,r]$ {\it a.e.} If we require $(2\alpha + 1) \beta 
\geq 2\alpha + 2$, then the factor 
$$
g_\beta(\rho) = \tan^{2\alpha(\beta - 1)}\left(\frac{\rho}{2}\right)\, \sin^{\beta -2} 
\!\rho \, [2\alpha + \cos \rho]
$$
is increasing in $\rho$. Thus we can define $\Phi_\beta$ by 
\begin{equation}\label{Phi_eqn}
\phi_{\beta}(y)=\beta g_{\beta}\circ Z^{-1}(y), \qquad\qquad 
\Phi_{\beta}(y)=\int_0^y \phi_{\beta}(s)\, ds.
\end{equation}
Observe that $Z$ and $g_\beta$ are increasing, so $\phi_\beta$ is 
increasing and $\Phi_\beta$ is convex. Using $g_\beta(\bar r(\rho, \theta)) 
\leq g_\beta(\rho)$, we have 
\begin{align*}
J\ge&
\int_0^{\pi/\alpha}\int_{H_\theta} g_{\beta}(\bar r(\rho,\theta))\,
\tan^{2\alpha}\!\left(\frac {\rho}{2}\right)\,\sin\rho\, d\rho\, \sin^2\!
\alpha\theta\, d\theta\\
=&
 \frac 1{\beta}\int_0^{\pi/\alpha}\int_{H_\theta} \phi_\beta\left(
\int_0^{\rho} \tan^{2\alpha}\!\left(\frac {\rho'}{2}\right)\,
\chi_{H_{\theta}}(\rho')\,\sin\rho'\, d\rho'
\right)\,
\tan^{2\alpha}\!\left(\frac {\rho}{2}\right)\,\sin\rho\, d\rho\, 
\sin^2\!\alpha\theta\, d\theta.
\end{align*}

Now, using Lemma~\ref{Szego_lemma} with $\psi(\rho)=\tan^{2\alpha}
(\rho/2)\sin(\rho)\,\chi_{H_\theta}$ we have
$$
J\ge\frac 1{\beta}
\int_0^{\pi/\alpha}\Phi_{\beta}\left(
\int_{H_\theta}  \tan^{2\alpha}\!\left(\frac {\rho}{2}\right)\,\sin\rho\, d\rho
\right)
\, \sin^2\!\alpha\theta\, d\theta\
$$
with equality if and only if $H_{\theta}=[0,r(\theta)]$ is an interval 
{\it a.e.} Next, by Jensen's inequality (with the measure given by
$d\nu = \sin^2 \alpha \theta \,d\theta$),
$$
J\ge\frac{\pi}{2\alpha\beta}\Phi_{\beta}\left(\frac{2\alpha}{\pi}
\int_0^{\pi/\alpha}
\int_{H_\theta}  \tan^{2\alpha}\!\left(\frac {\rho}{2}\right)
\, \sin^2\!\alpha\theta \,\sin\rho\, d\rho
\, d\theta\right)
$$
with equality if and only if $\bar r(\theta)$ is {\it a.e.} constant.
Substituting back,
$$
I \ge \left ( \frac{2\alpha}{\pi} \right )^{\frac{4}{3\beta} -1}
(\beta J)^{\frac{4}{3\beta}} 
 \geq  \frac{\pi}{2\alpha}\,\left\{
 \Phi_{\beta}\left(\frac{2\alpha}{\pi}
\int_0^{\pi/\alpha}
\int_{H_\theta}  \tan^{2\alpha}\!\left(\frac {\rho}{2}\right)
\, \sin^2\!\alpha\theta \,\sin\rho\, d\rho
\, d\theta\right)\right\}^{\frac 4{3\beta}}.$$
Reinserting this back into~(\ref{AreaMomOrig_eqn}) yields
\begin {eqnarray} \label{Iso_eq}
\int_{\partial G} w^2\, ds & \ge & \left ( \frac{\pi}{2\alpha} 
\right )^{\frac 14}I^{\frac 34} \geq 
 \frac{\pi}{2\alpha}\,
 \Phi_{\beta}^{\frac 1{\beta}}
\left(\frac{2\alpha}{\pi}
\int_0^{\pi/\alpha}
\int_{H_\theta}  \tan^{2\alpha}\!\left(\frac {\rho}{2}\right)
\, \sin^2\!\alpha\theta \,\sin\rho\, d\rho
\, d\theta\right) \\ \nonumber 
& = & \frac{\pi}{2\alpha} \Phi_\beta^{\frac{1}{\beta}}
\left ( \frac{2\alpha}{\pi} \int_G w^2 da \right )
\end{eqnarray}
where equality holds if and only if also $\rho(\theta)$ is constant {\it a.e.}
Notice that the right hand side of this inequality is always bounded 
by $\int_{\partial G}w^2 ds$, and so we can use the Dominated Convergence 
Theorem to take a limit as $\beta \rightarrow \frac{4}{3}$ from below. In 
other words, (\ref{Iso_eq}) holds for $\beta=\frac 43$.

Let us compute $\Phi_{\beta}^{\frac{1}{\beta}}\left(Y\right)$. Since it depends 
only on~(\ref{Phi_eqn}), it would be the same for any function  $v^*$ whose 
level sets $G^*_{\eta}=\{ x: v^*(x)\ge \eta\}$ give the same value for the 
integral of $w^2$ (see~(\ref{zeta_eqn}) below). In this case, we choose a 
spherical rearrangement whose levels are the sectors $G^*_\eta = \calS(r(\eta))$.
Expressing things in terms of $r(\eta)$, we have 
\begin{equation}\label{ChV_eq}
\frac{2\alpha}{\pi}y=\frac{2\alpha}{\pi}\zeta(\eta)=\frac{2\alpha}{\pi}
\int_{\calS\bigl(r(\eta)\bigr)} w^2\, da = Z\bigl(r(\eta)\bigr)
\end{equation}
so, changing variables $s=Z(r)$
\begin{align}
\Phi_{\beta}\left(Y\right)&=\int _0^Y \phi_{\beta}(s)\, ds\notag \\
&= \beta\int_0^{Z^{-1}(Y)} g_{\beta}(r) \, \tan^{2\alpha}\!\left(
\frac r2\right)\sin r\, dr\notag\\
&=\beta\int_0^{Z^{-1}(Y)}\left[
\tan^{2\alpha}\!\left(\frac{r}{2}\right){\sin r}\right]^{\beta-1} 
\left[ {2\alpha + \cos r}\right]\, \, \tan^{2\alpha}\!\left(\frac r2\right)\, 
dr\notag\\
&= \left[
\tan^{2\alpha}\!\left(\frac{Z^{-1}(Y)}{2}\right){\sin (Z^{-1}(Y))}\right]^{\beta}
.\label{PhiBeta_eq}
\end{align}
Observe that we get the same equation~(\ref{Iso_eq}) for all $\beta$.
Thus we set $\Upsilon_{\alpha}= \Phi_{\beta}^{\frac 1{\beta}}$ 
in~(\ref{Iso_eq}) giving~(\ref{Isop_eq}).
\end{proof}

It is precisely at inequality~(\ref{desideratum1}) where the analagous 
proof in the hyperbolic case 
fails. In the hyperbolic case, the harmonic weight function is 
$w(\rho,\theta) = \tanh^{2\alpha}(\rho/2) \sin (\alpha\theta)$, 
and versions of equations (\ref{moment_ineq}), (\ref{def_f}) hold 
with $\cos$ replaced by $\cosh$ and $\sin$ replaced by $\sinh$. 
This choice of $f$ gives us
$$f^2\,\dot f = \tanh^{2\alpha}\!\left(\frac {\rho}{2}\right)\left[ 
\frac{2\alpha + \cosh \rho}3\right], $$
much like the formula above, but this does not yield $f^2 \,\dot f
\leq \alpha \tanh^{2\alpha}(\rho/2)$, because $\cosh\rho$ grows 
exponentially with $\rho$. To rememdy this problem, one can 
try to vary the power of $\sinh(\rho/2)$ or $\cosh(\rho/2)$; however
this will only yield a worse inequality for $f^2 \, \dot f$.

\section{Estimate of Rayleigh Quotient.} \label {rayliegh_sec}
 
Theorem \ref{Main_th} now follows as in \cite{PW}. Let  $G\subset 
\Ss^2$  be a domain that lies in the wedge $\calW = \{ (\rho,\theta): 0\le \rho,
\ 0\le \theta\le \pi/\alpha\}$. It suffices to  estimate the Rayleigh quotient 
for admissible functions $u\in C^2_0(G)$ that  are twice continuously 
differentiable and compactly supported in $G$. Any admissible function may 
be written $u=v w$ using the harmonic function (\ref{w_eqn}) and $v\in 
C^2_0(G)$. The divergence theorem shows
$$
\int_G |du|^2\, da = \int_G w^2\, |dv|^2 \, da.
$$
Let  $G_{t}$ denote the points of $G$ satisfying  $v\ge t$. 
Putting
\begin{align}
\zeta(t) &= \int_{G_t} w^2 \, da,\label{zeta_eqn}
\end{align}
we see that  $\zeta(0)=\hat\zeta\ge \zeta(t)\ge 0=\zeta(\hat v)$, where $\hat v 
= \max_Gv$,
$$
\frac{\partial \zeta}{\partial t} = - \int_{\partial G_t} \dfrac{w^2}
{|dv|}\, ds
$$
and 
$$
\int_G w^2\, v^2\, da = \int_0^{\hat v} 2t\,\zeta(t)\, dt=\int_0^{\hat 
\zeta} t^2 d\zeta.
$$

Then,  using   the coarea formula, Schwarz's inequality, 
Lemma~\ref{isop_lemma}, and changing variables to $y=\zeta(t)$, 
the inequality~(\ref{Isop_eq}) implies
\begin {eqnarray} \label {1st-rayleigh-est}
\int_G w^2\,|d v|^2\, da 
& \ge & \int_0^{\hat v}\left\{ \int_{\partial G_t} w^2\, |dv|
\, ds\right\} \, dt \\ \nonumber
& \ge & \int_0^{\hat v}\frac{\left\{ \int_{\partial G_t} w^2\, 
ds\right\}^2}{\int_{\partial G_t} \dfrac{w^2}{|dv|}\, ds } \, 
dt\\ \nonumber
& \ge & \frac{\pi^2}{4\alpha^2}\,
\int_0^{\hat v}\frac{\Upsilon_{\alpha}^2\left(\dfrac{2\alpha}{\pi}
\zeta(t)\right)}{-\dfrac{\partial \zeta}{\partial t}} \, dt.
\end{eqnarray}

Changing variables to $y=\zeta(t)$ we have
\begin{equation} \label {2nd-rayleigh-est}
\int_0^{\hat \zeta}\Upsilon_{\alpha}^2\left(\frac{2\alpha}{\pi}y\right)
\left(\frac{\partial t}{\partial y}\right)^2 \, dy\ge \mu \int_0^{\hat \zeta}
t(y)^2\, dy
\end{equation}
where $\mu$ is the least eigenvalue of the boundary value problem
\begin{gather}
\frac{\partial}{\partial y}\left(\Upsilon_{\alpha}^2\left(\frac{2\alpha}
{\pi}y\right)\frac{\partial q}{\partial y}\right)+\mu\,  q = 0,\label{OneD_eq}\\ 
q(\hat\zeta)=0,\qquad\qquad \lim_{y\to 0+} \Upsilon_{\alpha}^2\left(\frac{2\alpha}
{\pi}y\right)\frac{\partial q}{\partial y}=0.\label{OneDBC_eq}
\end{gather}

Now perform the change variables in~(\ref{OneD_eq}) and~(\ref{OneDBC_eq}) given 
by~(\ref{ChV_eq}), so that the domain is now $[0, r^*]$, $Z(r^*) = \frac{2\alpha}
{\pi} \hat \zeta$, and $\mu$ is now the least eigenvalue of
\begin{gather}
\frac{\partial}{\partial r}\left(\tan^{2\alpha}\!\left(\frac r2\right)\sin(r)\, 
\frac{\partial q}{\partial r}\right)+\frac{\pi^2\mu}{4\alpha^2}\tan^{2\alpha}\!
\left(\frac r2\right)\sin (r) q = 0,\label{ROneD_eq}\\
q(r^*)=0,\qquad\qquad \lim_{r\to 0+} \tan^{2\alpha}\!\left(\frac r2\right)\sin (r)
\,\frac{\partial q}{\partial r}=0.\label{ROneDBC_eq}
\end{gather}
Note that~(\ref{ROneD_eq}) is the eigenequation for the spherical sector 
$\calS(r^*)$. Hence $\frac{\pi^2\mu}{4\alpha^2}=\lambda_1(\calS(r^*))$.

Reassembling using equations (\ref{1st-rayleigh-est}) and 
(\ref{2nd-rayleigh-est}), we get the inequality
$$
\int_G |d u|^2\, da 
 \ge  \lambda_1\bigl(\calS(r^*)\bigr)\, \int_G u^2\, da, 
$$
which implies the inequality~(\ref{EV_eq}). 

\section{Computation of the lower bound and applications.} \label {appl_sec}

The eigenvalue $\lambda^*=\lambda_1(\calS(r^*))$ occurs as the eigenvalue of 
the problem~(\ref{ROneD_eq}), (\ref{ROneDBC_eq}) on $[0,r^*]$, which may be 
rewritten
\begin{gather*}
\sin(r)\, q'' + [2\alpha + \cos(r)]\, q' + \lambda^*\sin(r)\, q = 0;\\
\lim_{r\to 0-}\tan^{2\alpha}\!\left(\frac r2\right)\,\sin(r)\,
\frac{dq}{dr}(r)=0,\qquad\qquad q(r^*)=0.
\end{gather*}
Making the change of variable $x=\frac 12({1-\cos r})$ transforms the ODE 
to the hypergeometric equation on $[0,1]$
\begin{gather*}
x(1-x)\, \ddot y+[c-(a+b+1)x]\, \dot y - ab\, y  =  0,\\
\lim_{x\to 0-}x^{\alpha+1}\,
\frac{dy}{dr}(x)=0,\qquad\qquad q(x^*)=0.
\end{gather*}
with
$$
a,b = \frac{1\pm\sqrt{1+4\lambda^*}}{2},\qquad c={\alpha+1}.
$$
The solution to the hypergeometric equation is Gau\ss's ordinary 
hypergeometric function, given by
$$
{}_2\text{F}_1(a ,b ; c ; x )= 1+\frac{a b }{c }\frac x {1!}+
\frac{a (a +1)b (b +1)}{c (c +1)}\frac
{x ^2}{2!}+\frac{a (a +1)(a +2)b (b +1)(b +2)}{c (c +1)(c +2)}
\frac {x ^3}{3!}+\cdots .
$$
We find the eigenvalue by a shooting method. Given $r^*$, 
$\lambda^*$ is the first positive root of the function
\begin{equation}\label{RootEV_eq}
\lambda\mapsto {}_2\text{F}_1\left( \frac{1-\sqrt{1+4\lambda}}{2} , 
\frac{1+\sqrt{1+4\lambda}}{2} ; {\alpha+1} ; \frac{1-\cos r^*}2 \right).
\end{equation}

\begin{table}\label{EV_tab}
\begin{center}
\begin{tabular}{| c | l l l l |}
\hline
$G$ & $\calI(G)$ & $r^*$ & $\lambda_1(G)$ & $\lambda_1(\calS(r^*))$\\ 
\hline\hline
$\calW$ & $\infty$  & $\pi$  &  $(\alpha+1)\alpha$ & $(\alpha+1)\alpha$ \\
$\calS(\frac{\pi}2)$ &  $\frac{\pi}{2\alpha}Z\left(\frac{\pi}2\right)$   
& $\frac{\pi}2$  & $(\alpha+1)(\alpha+2)$  & $(\alpha+1)(\alpha+2)$ \\
$\calS(r)$ & $\frac{\pi}{2\alpha}Z(r)$   &  $r$ & $\lambda^*$  & $\lambda^*$ \\ 
\hline
$\calW$, $\alpha=\frac 32$ & $\infty$   & 3.14159265  & 3.75  & 3.75  \\
$\calS\left(\delta\right)$, $\alpha=\frac 32$ &  2.07876577  &  2.18627604 
& 5.00463538  &  5.00463538\\
$\calS(\varepsilon)$, $\alpha=\frac 32$ &   0.90871989  & 1.91063324  
& 6.19561775  &  6.19561775\\
$\calS(\frac {\pi}2)$, $\alpha=\frac 32$ &    0.30118555  & 1.57079633  
& 8.75  & 8.75 \\
$\calT$ &    1.88896324 &  2.15399460 & 5.1590\ldots        &  5.11641465\\
$\hat\calT$ &  1.90831355 & 2.15742981 & ? & 5.10421518\\
\hline
\end{tabular}
\caption{Domains and eigenvalues. In this table $\delta = \cos^{-1}
(-1/\sqrt{3})$ and $\varepsilon = \cos^{-1}(-1/3)$. Values not described  
are taken from~\cite{RT}.
}
\end{center}
\end{table}

Consider the example of the geodesic triangle $\calT\subset\Ss^2$ which is 
a face of the regular tetrahedral tessellation, whose vertices in the unit 
sphere could be taken as $\left(\frac 1{\sqrt 3},\pm \sqrt{\frac 23},0\right)$ 
and $\left(-\frac 1{\sqrt 3},0,\pm\sqrt{\frac 23}\right)$. The distance 
between vertices is $\varepsilon=\cos^{-1}\left(-\frac 13 \right)$. The 
diameter, which equals the distance from vertex to center of the opposite 
edge, is $\delta=\cos^{-1}\left(-\frac 1{\sqrt 3}\right)$. $\calT$ fits 
inside a wedge sharing a vertex of angle $\frac{2\pi}3$.  Writing
$$
\calT = \left\{ (\rho,\theta):0\le\theta\le \frac{2\pi}3,\quad 0\le \rho\le 
r(\theta)\right\}
$$
we find
$$
r(\theta)= \frac{\pi}2+\Atn\left(\frac{ \cos(\theta-\frac{\pi}3)}{\sqrt 2}
\right).
$$
At the vertex we have $\alpha=\frac 32$ so that
$$
Z(r)=\int_0^r \tan^3\!\left(\frac {\rho}2\right)\,\sin\rho\, d\rho=
4\tan\!\left(\frac {r}2\right)+\sin r-3r.
$$
$\lambda_1(\calT)$ was found numerically in~\cite{RT}. Using the 
computer algebra system  {\sc Maple}$\copyright$, we numerically integrate
$$
\calI(\calT)=\int_0^{\pi/\alpha} Z(r(\theta))\, \sin^2(\alpha\theta)\, d\theta
$$
and solve $\frac{\pi}{2\alpha}Z(r^*)=\calI(\calT)$ for $r^*$ 
and~(\ref{RootEV_eq}) for $\lambda^*$   to get the other values in the 
$\calT$ line in Table~1. 

To avoid the quadrature, we observe the estimate
$$
Z(r(\theta))\le T(\theta):= A_1 + A_2\cos\Bigl( \theta-\frac{\pi}3\Bigr)
+A_3\Bigl( 1-\cos(6\theta)\Bigr),
$$
where $A_1$ and $A_2$  are chosen so that the functions agree at $\theta=0$ 
and   $\theta=\frac{\pi}3$ and the $A_3$ is chosen to make the second 
derivatives agree at $\frac{\pi}3$. The inequality follows since the second 
derivative of the difference goes from negative to positive  in $0< 
\theta<\pi/3$.
 This corresponds to the larger domain $\hat \calT$ whose radius function 
is $\hat r(\theta)= Z^{-1}(T(\theta))$. 
Then
\begin{equation}\label{HatT_eq}
\frac{\pi}{2\alpha}Z(\hat r^*)=\int_{\hat\calT} w^2\, da = 
\int_0^{\frac{2\pi}3} T(\theta)\, \sin^2\!\left(\frac 32\theta\right)\, 
d\theta=\frac{\pi}3A_1+\frac{9\sqrt 3}{16}A_2+\frac{\pi}3A_3.
\end{equation}
Using these values we obtain the last row of Table~1. By eigenvalue 
monotonicity, if $\hat\calT\supset\calT$ then $\lambda_1(\calT)\ge
\lambda_1(\hat\calT)$. 

This eigenvalue estimate provides an alternative to our argument~\cite{RT} 
in a Brownian pursuit problem.  We finished the missing ($n=4$) case in a 
proof by  Li and Shao~\cite{LS} of the conjecture of Bramson and 
Griffeath~\cite{BG}.
\begin{cor} Suppose the prey $X_0(t)$ is chased by $n$ pursuers $X_1(t),
\ldots,X_n(t)$, all doing independent standard Brownian motions on the 
line. Suppose that the pursuers start to the left of the prey $X_j(0)
<X_0(0)$ for all $j=1,\ldots,n$. Then the expected capture time is finite 
if and only if $n\ge 4$. 
\end{cor}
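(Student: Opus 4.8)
The plan is to recast the capture time as the exit time of a Brownian motion from a cone, to reduce the claim to the borderline case $n=4$, and then to close that case with Theorem~\ref{Main_th} and the hypergeometric characterization~(\ref{RootEV_eq}). Since all pursuers start to the left of the prey, the capture time $\tau$ for $n$ pursuers is the first exit time of the $\R^{n+1}$-valued Brownian motion $(X_0,X_1,\dots,X_n)$ from the open cone $C_n=\{x:x_0>x_j\text{ for all }j\}$. The cone $C_n$ contains the diagonal line, so modulo the diagonal it is a proper cone in $\R^n$ whose spherical link is the spherical simplex $D_n\subset\Ss^{n-1}$ all of whose dihedral angles equal $2\pi/3$; in particular $D_2$ is an arc of length $2\pi/3$ and $D_3$ is congruent to the tetrahedral triangle $\calT$ of Table~1. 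By the classical dictionary between Brownian exit from a cone and the first Dirichlet eigenvalue of its link, $\E[\tau]=\int_0^\infty P(\tau>t)\,dt$ is finite if and only if the homogeneity exponent $\alpha_n$ of the positive harmonic function on $C_n$, defined by $\alpha_n(\alpha_n+n-2)=\lambda_1(D_n)$, exceeds $2$ --- equivalently, if and only if $\lambda_1(D_n)>2n$.

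For $n=1$ the expectation is infinite trivially (a single one-dimensional pursuit meets the prey at the hitting time of a point), and for $n=2,3$ the criterion $\lambda_1(D_n)>2n$ fails, since $\lambda_1(D_2)=9/4<4$ and $\lambda_1(D_3)=\lambda_1(\calT)\approx 5.16<6$; this is the classical direction of the Bramson-Griffeath conjecture~\cite{BG}. For $n\ge 5$ we discard all but four pursuers; since capture by a subset of the pursuers can only happen later, the capture time can only increase, so $\E[\tau_n]\le\E[\tau_4]$, and it suffices to prove $\E[\tau_4]<\infty$.

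For $n=4$ I would not estimate $\lambda_1(D_4)$ on $\Ss^3$ directly, but instead invoke the reduction of Li and Shao~\cite{LS} (compare~\cite{RT}): roughly, writing $\tau_4=\min(\tau_{123},\rho_4)$ with $\tau_{123}$ the capture time by the first three pursuers and $\rho_4$ the first time the fourth pursuer meets the prey, and conditioning on the prey's trajectory, one shows that $\E[\tau_4]<\infty$ is equivalent to a \emph{strict} lower bound $\lambda_1>\lambda_c$ for the first Dirichlet eigenvalue of a spherical domain contained in the wedge $\calW\subset\Ss^2$ of angle $2\pi/3$ --- up to eigenvalue monotonicity this domain is the tetrahedral triangle $\calT$, equivalently the slightly larger domain $\hat\calT\supset\calT$ with radius function $\hat r(\theta)=Z^{-1}(T(\theta))$ and moment $\int_{\hat\calT}w^2\,da$ given in closed form by~(\ref{HatT_eq}). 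The point is that $\lambda_c$ lies in the narrow window $\lambda_1(\calS(\delta))<\lambda_c<\lambda_1(\calS(r^*))$, where $\delta=\cos^{-1}(-1/\sqrt3)$ is the diameter of $\calT$: the crude inclusion $\calS(\delta)\subset\calT$ gives only $\lambda_1(\calT)\ge\lambda_1(\calS(\delta))\approx 5.00$, which does not clear $\lambda_c$, whereas Theorem~\ref{Main_th} applied to $\hat\calT$, with $r^*$ determined by $\frac{\pi}{2\alpha}Z(r^*)=\int_{\hat\calT}w^2\,da$ and $\lambda_1(\calS(r^*))$ computed from the first positive root of~(\ref{RootEV_eq}), yields $\lambda_1(\calT)\ge\lambda_1(\hat\calT)\ge\lambda_1(\calS(r^*))\approx 5.10>\lambda_c$ --- precisely the last two rows of Table~1. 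Hence $\alpha_4>2$, so $\E[\tau_4]<\infty$, and with the two reductions above the corollary follows.

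The step I expect to be the main obstacle is quantitative rather than structural: the whole argument turns on $\lambda_c$ lying strictly between the crude monotonicity bound $\lambda_1(\calS(\delta))$ and the sharp bound $\lambda_1(\calS(r^*))$ of Theorem~\ref{Main_th}, so the sharp estimate cannot be replaced by a mere inclusion. One must both transcribe the Li-Shao reduction faithfully down to exactly this eigenvalue inequality, and then verify --- via the shooting characterization~(\ref{RootEV_eq}) together with the quadrature~(\ref{HatT_eq}) --- that $\lambda_1(\calS(r^*))$ genuinely exceeds $\lambda_c$. The remaining ingredients (the cone-eigenvalue dictionary and the monotonicity reductions for $n\le 3$ and $n\ge 5$) are routine.
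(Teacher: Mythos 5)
Your proof is correct and follows essentially the same route as the paper's, which likewise reduces via the Li--Shao argument and \cite{RT} to the single eigenvalue bound $\lambda_1(\calT)>5.101267527$ and then closes it with Theorem~\ref{Main_th} applied to $\calT$ or to $\hat\calT$; you merely spell out the cone--eigenvalue dictionary and the $n\ne 4$ monotonicity reductions that the paper relegates entirely to its references. One minor slip: the inclusion near the end should read $\calT\subset\calS(\delta)$ (the sector circumscribing $\calT$, since $\delta$ is the maximum of the radius function $r(\theta)$), so that domain monotonicity gives $\lambda_1(\calT)\ge\lambda_1(\calS(\delta))\approx 5.00$ as you state, rather than the reverse inclusion you wrote.
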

In fact, for the  capture time for $n$ pursuers
$$
\tau_n=\inf\{ t>0: X_j(t)\ge X_0(t)\text{ for some $j\ge 1$}\}
$$
there are finite constants~$a(n)$, and~$C$ depending on the initial 
position and the eigenvalue of the link of the pursuit cone\cite{DB} 
so that the probability
$$
{\mathbb P}(\tau_n>t) \sim C\, t^{-a}\qquad\text{as $t\to\infty$}.
$$ 
The proof shows $a(n)>1$ and thus $\mathbb E\tau_n<\infty$ if and only if $n\ge 4$. Our eigenvalue estimates 
give the following  corresponding bounds on the decay rates since they are 
related by a formula to  the eigenvalue estimates~\cite{RT}. From the 
estimate on $\hat \calT$, $a(3)\ge .90695886$ and so $a(4)\ge 1.00029446$; 
from the estimate of $\calT$ involving quadrature, $a(3)\ge .90827616 $ 
and $a(4)\ge  1.00151234$.

\begin{proof} Details are provided in \cite{RT}. Finiteness of the 
expectation  of $\tau_4$ follows if it can be shown that $\lambda_1(\calT)
>5.101267527$. 
The lower eigenvalue bound is given by Theorem~\ref{Main_th} applied to 
$\calT$  depends on either the numerical integration of $\calI(\calT)$ or 
its upper bound by the quadrature free estimate of~(\ref{HatT_eq}).
\end{proof}
 
\medskip
\small{\noindent  {\bf Acknowledgement.} The second author thanks the 
University of California, Irvine, for  his visit while this work was 
completed. We also thank Lofti Hermi for bringing~\cite{PW} to our attention.}

\begin {thebibliography}{999}

\bibitem [BG]{BG} M. Bramson \& D. Griffeath. {\em Capture problems
for coupled random walks}.
Random Walks, Brownian Motion and Interacting Particle Systems
(R. Durrett \& H. Kesten, ed.).
  Birkh\"auser, 1991.
  
\bibitem[Ch]{Ch} I. Chavel. Eigenvalues in Riemannian Geometry. 
Series in Pure and Applied Mathematics. 115. Academic 
Press, Inc., Orlando, 1984. 

\bibitem [C]{C} J. Cheeger. {\em A lower bound for the 
smallest eigenvalue of of the Laplacian.} in Problems in Analysis.
Princeton University Press, 1970. 

\bibitem [DB]{DB} R. D. DeBlassie. {\em Exit times from cones in $\R^n$ of
Brownian motion}. Prob.
Theory and Rel. Fields. {\bf 74} (1987), 1--29.

\bibitem [F]{F} C. Faber. {\em Beweiss, dass unter allen homogenen 
Membrane von gleicher Fl\"ache und gleicher 
Spannung die kreisf\"ormige die tiefsten Grundton gibt.} 
Sitzungsber.--Bayer. Akad. Wiss., Math.--Phys. Munich. (1923), 169--172. 

\bibitem [K]{K} E. Krahn. {\em \"Uber eine von Rayleigh formulierte 
Minmaleigenschaft des Kreises.} Math. Ann. {\bf 94} (1925), 97--100. 

\bibitem [LT]{LT} P. Li \& A. Treibergs. {\em Applications of 
eigenvalue techniques to geometry}. Contemporary Geometry: 
J.-Q.~Zhong Memorial Volume
(H.-H. Wu, ed.). University Series in Mathematics (Plenum Press),  
New York, 1991., pp. 22--54. 

\bibitem [LS]{LS} W. Li \& Q.-M. Shao. {\em Capture time of Brownian
pursuits}. Prob. Theory and Rel. Fields. {\bf 121} (2001), 30--48.

\bibitem[P]{P} L. Payne. {\em Isoperimetric inequalities and their 
applications}. SIAM Review. {\bf 9} (1967), 453--488.

\bibitem[PW]{PW} L. Payne \& H. Weinberger. {\em A Faber-Krahn inequality 
for wedge-like membranes.} Journal of Mathematics and Physics. {\bf 39} 
(1960) 182--188.

\bibitem [RT]{RT} J. Ratzkin \& A. Treibergs. {\em A capture problem in 
Brownian motion and eigenvalues of spherical domains.}
to appear in Transaction of the Amer. Mathematical Soc. (2007).

\bibitem [Sz]{Sz} G. Szeg\H o. {\em \" Uber eine Verallgemeinerung des 
Dirichletschen Integrals.} Math. Zeit. {\bf 52} (1950),  676--685.

\end {thebibliography}

\end{document}